\newtheorem{theorem}{Theorem}[section]
\newtheorem{corollary}[theorem]{Corollary}
\theoremstyle{definition}
\newtheorem{problem}[theorem]{Problem}
\theoremstyle{remark}
\theoremstyle{conjecture}
\newtheorem{conjecture}[theorem]{Conjecture}
\numberwithin{equation}{section}
\newcommand{\comment}[1]{}
\begin{document}

\begin{frontmatter}
\title{A lower--bound for the number of conjugacy classes of $A_n$}

%% Group authors per affiliation:
\author{Xandru Mifsud\corref{mycorrespondingauthor}}
\address{Department of Mathematics, University of Malta, Msida, Malta}
\ead{xmif0001@um.edu.mt}

\begin{abstract}
    We establish a sharp lower--bound for the number of conjugacy classes $k(A_n)$ in the alternating group $A_n$, for $n \geq 3$. Namely, we show that $k\left(A_n\right) \geq \frac{k\left(A_7\right)}{\log_2\left|A_7\right|} \cdot \log_2\left|A_n\right|$ with equality if, and only if, $n = 7$. The observations leading towards this result were obtained through a \textit{genetic algorithm} developed to search for groups having certain properties.
\end{abstract}

\begin{keyword}
finite group\sep conjugacy class\sep alternating group
\MSC[2020] 20E45
\end{keyword}
\end{frontmatter}

\section{Introduction}

One of the problems in group theory is the classification of finite groups by the number of conjugacy classes. Given a group $G$, we denote its number of conjugacy classes by $k(G)$ and its order by $|G|$. Unless otherwise stated, all groups considered throughout this work are finite. All groups with $k(G) \leq 14$ have been classified, with the cases for $k(G) = 13$ and $k(G) = 14$ being resolved recently in \cite{Lopez}. Consequently, bounding $k(G)$ is an important problem in group theory. 

A number of bounds for $k(G)$ have been established in the literature. Recently, Maróti \cite{MarotiKBound} showed that if $G$ is a finite group whose order is divisible by a prime $p$, then $k(G) \geq 2\sqrt{p - 1}$. Of notable interest are bounds establishing explicit relations between $k(G)$ and $|G|$. Erdős and Turán had established that $k(G) \geq \log_2 \log_2 |G|$. One of the main problems in this area is resolving the following conjecture on the relation between $k(G)$ and $\log_2 |G|$. 

\begin{conjecture}\label{Conj1}
There exists a constant $C > 0$ such that for any finite group $G$, $k(G) \geq C \log_2 |G|$.
\end{conjecture}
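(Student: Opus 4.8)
The plan is to attack Conjecture~\ref{Conj1} by the two-stage strategy standard for such ``$k(G)$ versus $\log_2|G|$'' problems: reduce to nonabelian simple groups via the classification of finite simple groups (CFSG), and then control arbitrary group extensions by induction along a chief series. First I would fix a hypothetical minimal counterexample $G$, i.e.\ a group of least order violating $k(G) \geq C\log_2|G|$ for the constant $C$ to be pinned down, and exploit minimality so that every proper section of $G$ already satisfies the bound. Writing $\log_2|G| = \log_2|N| + \log_2|G/N|$ for a nontrivial normal subgroup $N \trianglelefteq G$, the aim is to produce an inequality of the shape ``$k(G) \gtrsim$ (contribution from $N$) $+$ (contribution from $G/N$)'' that is additive enough to survive the logarithm.

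For the base case I would establish the linear bound for every nonabelian simple group, treating the CFSG families separately. For the alternating groups it follows from the main result of the present paper, since $k(A_7)/\log_2|A_7|$ is a fixed constant. For the groups of Lie type one checks that $k$ grows at least like a fixed power of the Lie rank, so that $k$ is roughly $q^{r}$ against $\log_2|G| \approx (\dim G)\log_2 q$, comfortably beating the target; the finitely many sporadic groups are verified directly. These groups carry an abundance of conjugacy classes, so the base case is the easy part.

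The inductive extension step is where I expect the real fight, and it is precisely why the statement is a conjecture rather than a theorem. Given $N \trianglelefteq G$, the classes of $G$ lying inside $N$ are in bijection with the $G$-orbits on the set $\mathrm{cl}(N)$ of $N$-classes (the action factoring through $G/N$), and this number can be as small as $k(N)/|G/N|$: the action of $G/N$ may collapse the classes of $N$ and swallow the contribution one hopes to inherit. Guaranteeing enough large, preferably regular, $G$-orbits on $\mathrm{cl}(N)$ (equivalently on $\mathrm{Irr}(N)$) is the crux. When $G$ has a nonabelian composition factor this is manageable, so the obstruction concentrates in the solvable case, and ultimately in $p$-groups, where no simple-group structure is available.

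The hard part, therefore, is the $p$-group (more generally nilpotent and solvable) case, where one must show $k(P) \geq C\log_2|P|$ essentially by hand. The best available tools fall short here by exactly a $\log\log$ factor: Pyber's geometric and character-theoretic arguments give only $k(G) \geq \epsilon\,\log_2|G|/(\log_2\log_2|G|)^{8}$, with later work refining the exponent but not removing the loss. My proposal would be to replace the coset-counting at the heart of those arguments with a sharper regular-orbit theorem for coprime and for $p$-local actions, aiming to delete the $\log\log$ factor. I do not expect this final removal to be routine — it is the genuine open obstacle, and I would not claim to dispose of it here.
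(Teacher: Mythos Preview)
The statement you are attempting is labeled \emph{Conjecture~\ref{Conj1}} in the paper, and the paper does not prove it. It is presented as an open problem; the paper's contribution is only the special case of alternating groups (Theorem~\ref{Thm1}), together with the remark that the Pyber--Keller bound is the sharpest general result known. There is therefore no ``paper's own proof'' against which to compare your proposal.

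As for the proposal itself: it is not a proof and you say so explicitly. Your outline --- reduce to simple groups via CFSG, handle those family by family, then push through a chief series --- is indeed the standard architecture behind Pyber's and Keller's work, and you correctly locate the genuine obstruction in the solvable (ultimately $p$-group) case, where one is left with the $(\log_2\log_2|G|)^{7}$ loss. But the final paragraph, where you propose to ``replace the coset-counting \ldots\ with a sharper regular-orbit theorem \ldots\ aiming to delete the $\log\log$ factor'', is a wish, not an argument: no such regular-orbit statement is stated, let alone proved, and removing that factor is precisely the content of the conjecture. So the proposal contains no new idea beyond what is already in the Pyber--Keller line, and does not constitute progress on Conjecture~\ref{Conj1}.
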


One of the sharpest bounds obtained in this regard is the Pyber--Keller bound, first established by Pyber \cite{Pyber} and then further sharpened by Keller \cite{Keller}.
\begin{theorem}[\cite{Pyber, Keller}]
There exists a (explicitly computable) constant $C > 0$ such that every finite group $G$ with $|G| \geq 4$ satisfies $$k(G) \geq \dfrac{C}{\left(\log_2 \log_2 |G|\right)^7} \cdot \log_2 |G|.$$ 
\end{theorem}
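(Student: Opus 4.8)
The plan is to proceed by induction on $|G|$, breaking $G$ along a chief series and counting conjugacy classes one chief factor at a time. Two elementary inequalities drive the recursion. First, $k(G) \geq k(G/N)$ for every $N \trianglelefteq G$. Second, and more usefully, if $V \trianglelefteq G$ is elementary abelian with quotient $H = G/V$, then Clifford theory applied to the dual module $\widehat V$ gives $k(G) = \sum_{\omega} k(\mathrm{Stab}_H(\omega)) \geq k(H) + \big(r(H,\widehat V) - 1\big)$, where $r(H,\widehat V)$ is the number of $H$-orbits on $\widehat V$ and the summand $k(H)$ is the contribution of the trivial orbit. I would pair this with the counting constraint $r(H,\widehat V)\cdot |H| \geq |V|$, which holds because every orbit has size at most $|H|$. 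Since $\log_2|G| = \log_2|H| + \log_2|V|$ splits additively, the whole problem reduces to showing that at each step either the orbit number $r$ or the inductive estimate for $k(H)$ accounts for the extra $\log_2|V|$ worth of classes, and that this can be arranged without the implied constant deteriorating down the series.

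I would first treat the case of an abelian minimal normal subgroup $V$, so that $H = G/V$ acts irreducibly on $V$ and $H \leq \mathrm{GL}(V)$. Here the recursion $k(G) \geq k(H) + r(H,\widehat V) - 1$ is combined with a case split on the relative sizes of $V$ and $H$. When $\log_2|V| \leq \log_2|H|$, one has $\log_2|G| \leq 2\log_2|H|$ and the inductive bound for $k(H)$ already suffices; when $V$ is much larger than $H$, the constraint $r \cdot |H| \geq |V|$ forces $r$ to be large and the orbit term carries the estimate. The delicate intermediate regime, where $|V|$ and $|H|$ are comparable, is exactly where a genuine quantitative orbit theorem for linear groups is required: one must guarantee that an irreducible $H \leq \mathrm{GL}(V)$ has enough orbits on $\widehat V$ to register a real \emph{gain} at this step, and not merely recover the inductive hypothesis. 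Establishing such a lower bound is itself a substantial result, resting on the structure theory of linear groups (reduction to primitive components via Aschbacher's classes and bounds on the orders of primitive linear groups), and ultimately on the classification of finite simple groups.

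Next I would handle minimal normal subgroups that are nonabelian, i.e.\ products $N = S^t$ of isomorphic nonabelian simple groups, via the generalized Fitting subgroup $F^*(G) = F(G)E(G)$ and the containment $C_G(F^*(G)) \leq F^*(G)$, which pins $G/F^*(G)$ inside $\mathrm{Out}(F^*(G))$. The classification supplies the two facts I need about each simple composition factor $S$: that $k(S)$ grows suitably with $|S|$, and, crucially, that $|\mathrm{Out}(S)|$ is small (polylogarithmic in $|S|$), so that stacking the layer $E(G)$ and the action of $G$ upon it contributes many classes while costing only a controlled amount in the quotient. Counting the $G$-classes that lie inside $N$ and combining with $k(G/N)$ by induction then yields the required bound for the nonabelian case.

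The hard part will be the global bookkeeping. Each of the above steps is individually affordable, but a chief series can have length up to $\log_2|G|$, and a naive accumulation of per-step losses would destroy the bound or inflate the polylogarithmic factor. The real content of the argument is to show that the orbit gains from the plentiful-orbit steps can be amortised against the lossy steps, so that the total correction is held to exactly $(\log_2\log_2|G|)^7$ and the induction closes with a single fixed constant $C$; this is precisely the quantitative core of Pyber's original argument, and Keller's sharpening of the orbit estimates and of this amortisation is what lowers the exponent to $7$. The two points I expect to be genuinely difficult are therefore the orbit theorem for linear groups feeding the abelian case, and the careful error analysis needed to keep the accumulated factor polylogarithmic of the stated degree.
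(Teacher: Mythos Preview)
The paper does not prove this theorem. It is quoted as a background result, with citations to Pyber and Keller, purely to situate Conjecture~1.1 and to motivate the paper's own Theorem~1.3 on $A_n$. There is therefore no proof in the paper to compare your proposal against.

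For what it is worth, your outline is a faithful high-level sketch of the strategy in the cited references: reduction along a chief series; Clifford-theoretic orbit counting on the dual of an elementary abelian chief factor, fed by a quantitative orbit lower bound for irreducible linear groups; CFSG-based control of $k(S)$ and $|\mathrm{Out}(S)|$ for nonabelian chief factors via $F^{*}(G)$; and a global amortisation so that the accumulated loss stays polylogarithmic. You have correctly located the two genuinely hard ingredients (the orbit theorem and the bookkeeping), but what you have written is a plan rather than a proof, and each of those ingredients is essentially a paper on its own.
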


In light of Conjecture \ref{Conj1}, we prove the following sharp--bound of this type, for alternating groups.

\begin{theorem} \label{Thm1}
    Let $n \geq 3$. Then, $$k\left(A_n\right) \geq \dfrac{k\left(A_7\right)}{\log_2\left|A_7\right|} \cdot \log_2\left|A_n\right|$$
	with equality if, and only if, $n = 7$.
\end{theorem}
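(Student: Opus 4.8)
The plan is to derive an exact formula for $k(A_n)$, use it to lower--bound the increments $k(A_{n+1})-k(A_n)$, and then close the estimate by induction with a finite base check. I would start from the cycle--type description of conjugacy classes of $A_n$: a partition $\lambda\vdash n$ is the cycle type of even permutations precisely when it has an even number of even parts, and the resulting $S_n$--class splits into two $A_n$--classes precisely when the parts of $\lambda$ are odd and pairwise distinct. Let $q_e(n)$ be the number of partitions of $n$ with an even number of even parts, $q_o(n)=p(n)-q_e(n)$ the number with an odd number of even parts, and $d(n)$ the number of partitions of $n$ into distinct odd parts; then $k(A_n)=q_e(n)+d(n)$. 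Combining the generating identity $\sum_{\lambda}(-1)^{\ell(\lambda)}x^{|\lambda|}=\prod_{j\ge1}\bigl(1-x^{2j-1}\bigr)$ (a standard consequence of Euler's identity $\prod_{k\ge1}(1+x^{k})=\prod_{j\ge1}(1-x^{2j-1})^{-1}$) with the substitution $x\mapsto-x$ and comparing coefficients of $x^n$ yields $q_e(n)-q_o(n)=d(n)$, hence $q_e(n)=\tfrac12\bigl(p(n)+d(n)\bigr)$ and
\[
k(A_n)=\tfrac12\bigl(p(n)+3\,d(n)\bigr),
\]
where $p$ is the partition function.

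The core of the proof is an increment estimate. Since $d(n)$ is non--decreasing for $n\ge2$ — via the injection that, when a part $1$ occurs, deletes it and adds $2$ to the largest remaining part, and otherwise adjoins a part $1$ — the formula above gives, for $n\ge2$,
\[
k(A_{n+1})-k(A_n)\ \ge\ \tfrac12\bigl(p(n+1)-p(n)\bigr)\ =\ \tfrac12\,\#\{\lambda\vdash n+1:\ \lambda\ \text{has no part equal to}\ 1\}\ \ge\ \tfrac{n-2}{4},
\]
using the usual bijection (delete a part $1$) between partitions of $n$ and partitions of $n+1$ that have a part $1$, and then counting the two--part partitions of $n+1$ whose parts are both $\ge2$. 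Writing $c=k(A_7)/\log_2|A_7|=9/\log_2 2520$, one checks $4c/\ln2<5$, so $\tfrac{n-2}{4}-c\log_2(n+1)$ is increasing in $n$ for $n\ge4$ and, being positive at $n=15$, is positive for all $n\ge15$. Since $\log_2|A_{n+1}|-\log_2|A_n|=\log_2(n+1)$, this shows that whenever $k(A_n)\ge c\log_2|A_n|$ for some $n\ge15$, automatically $k(A_{n+1})>c\log_2|A_{n+1}|$.

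It remains to verify $k(A_n)\ge c\log_2|A_n|$ directly for $3\le n\le15$ — a short computation from $|A_n|=n!/2$ and the values of $k(A_n)$ supplied by the formula — and to note that in this range equality holds only at $n=7$. Starting the induction at $n=15$ then forces strict inequality for all $n\ge16$, so $k(A_n)\ge c\log_2|A_n|$ for every $n\ge3$, with equality if and only if $n=7$. The step I expect to be most delicate is the calibration in the second paragraph: one wants the provable lower bound for $p(n+1)-p(n)$, and hence for $k(A_{n+1})-k(A_n)$, to overtake $c\log_2(n+1)$ from a threshold small enough to keep the remaining base cases a genuinely short check — a coarser increment bound still works but enlarges the finite range. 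The auxiliary facts, namely the identity $q_e-q_o=d$ and the monotonicity of $d$, are elementary but each deserves a careful line.
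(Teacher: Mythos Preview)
Your argument is correct, and it follows a genuinely different route from the paper's proof. The paper does not analyse increments at all: it combines the crude inequality $k(A_n)\ge p(n)/2$ (your formula immediately implies this too, since $d(n)\ge 0$) with the estimate $\log_2|A_n|\le (n-2)\log_2 n$ and Mar\'oti's analytic lower bound $p(n)>e^{2\sqrt n}/14$ to show directly that $k(A_n)/\log_2|A_n|>1>c$ for every $n\ge 12$, leaving only $3\le n\le 11$ to check numerically. Your approach instead extracts the exact formula $k(A_n)=\tfrac12\bigl(p(n)+3d(n)\bigr)$ (this is equivalent to the paper's Theorem on $\alpha$ and $\beta$, though the paper never writes it in this closed form), proves the monotonicity of $d$ by an explicit injection, and bounds $p(n+1)-p(n)$ combinatorially, so that the whole proof becomes self--contained and avoids any external asymptotic input. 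The price you pay is a slightly larger finite range ($3\le n\le 15$ rather than $3\le n\le 11$) and a few more lines of bijective bookkeeping; what you gain is that nothing beyond elementary generating--function identities is invoked. Both the $x\mapsto -x$ derivation of $q_e-q_o=d$ and the threshold calibration at $n=15$ check out (indeed $13/4=3.25>4c=36/\log_2 2520\approx 3.186$, while the corresponding inequality fails at $n=14$), so the induction closes exactly where you say it does.
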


\subsection{Overview}

We begin by introducing some preliminaries in Section 2, primarily some textbook results on the relations between the number of conjugacy classes of $S_n$ and the number of partitions of a positive integer $n$, denoted by $p(n)$. This is followed by a proof of Theorem \ref{Thm1} in Section 3, and a short discussion in Section 4 on the result and some open problems.

\section{Preliminaries: Relations between $k(S_n)$, $k(A_n)$ and $p(n)$}

A classical result in the study of $S_n$ is that the conjugacy classes of $S_n$ correspond to the permutation cycle structures, and in turn to the partitions of $n$. Hence $k(S_n) = p(n)$. While a generating--function for the partition function $p(n)$ is known, no closed--form expression is known. Consequently, the study of (asymptotic) bounds and recurrence relations for $p(n)$ represents a highly active area of research in number theory. Maróti \cite{MarotiPBound} established a number of lower bounds for $p(n)$, namely the following, which will be useful in our proof for Theorem \ref{Thm1}.

\begin{theorem}[\cite{MarotiPBound}]\label{PBound}
For every positive integer $n$, $$\dfrac{e^{2\sqrt{n}}}{14} < p(n).$$
\end{theorem}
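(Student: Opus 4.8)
The plan is to prove the bound by strong induction on $n$, using the classical divisor-sum recurrence for the partition function rather than any analytic (saddle-point) machinery. This should be feasible because the target exponent $2$ lies strictly below the true Hardy--Ramanujan constant $\pi\sqrt{2/3}\approx 2.565$, so an elementary argument ought to close with room to spare, provided the slack is tracked carefully.

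First I would record the recurrence $n\,p(n)=\sum_{k=1}^{n}\sigma(k)\,p(n-k)$, where $\sigma(k)$ is the sum of divisors of $k$. This follows by taking the logarithmic derivative of the generating function $\sum_n p(n)x^n=\prod_k(1-x^k)^{-1}$ and comparing coefficients of $x^n$, using $\sum_{d\mid m}d=\sigma(m)$. The decisive feature is that \emph{every} summand is positive, which makes this recurrence far better suited to an inductive inequality than the alternating pentagonal-number recurrence. Note also that the required values $p(n-1),\dots,p(0)$ all carry indices below $n$, so the induction hypothesis applies at every term, with $p(0)=1\ge 1/14$ covering the endpoint.

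Next I would assume $p(m)\ge e^{2\sqrt m}/14$ for all $m<n$, substitute into the recurrence, and bound $\sigma(k)\ge k$. This reduces the inductive step to the single analytic inequality
$$\sum_{k=1}^{n}k\,e^{2\sqrt{n-k}}\ \ge\ n\,e^{2\sqrt n}.$$
I would approach this by comparison with $\int_0^n x\,e^{2\sqrt{n-x}}\,dx$; the substitution $u=\sqrt{n-x}$ turns it into $2\int_0^{\sqrt n}(nu-u^3)e^{2u}\,du$, and the point is that the two leading contributions $2\int nu\,e^{2u}\,du$ and $2\int u^3 e^{2u}\,du$ each have size $n^{3/2}e^{2\sqrt n}$ and cancel, leaving exactly $n\,e^{2\sqrt n}$ as the main term. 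This cancellation is precisely what pins the admissible exponent to $2$, so the inequality is tight at the level of the leading term and the proof cannot tolerate a lossy comparison.

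The main obstacle is this tightness. In fact the bare integral evaluates to $n\,e^{2\sqrt n}-\tfrac32\sqrt n\,e^{2\sqrt n}+\cdots$, so it \emph{undershoots} the required $n\,e^{2\sqrt n}$ by a term of order $\sqrt n\,e^{2\sqrt n}$; hence the discarded excess $\sigma(k)-k$ is not cosmetic but indispensable. I would recover the deficit from two sources that must be quantified explicitly: the genuine averaged surplus of $\sigma(k)$ over $k$ (on average $\sigma(k)$ is a constant factor larger, and the weight $e^{2\sqrt{n-k}}$ concentrates on small $k$ where this surplus is reliably present), and the positive sum-versus-integral correction from Euler--Maclaurin. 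The constant $14$ then plays the role of absorbing the residual error and of settling the finitely many base cases $n\le n_0$ by direct computation, with $14$ chosen as the smallest value keeping the asymptotic regime and the initial range simultaneously consistent; I would pay particular attention to the small $n$ where $e^{2\sqrt n}/14$ sits closest to $p(n)$.
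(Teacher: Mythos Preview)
The paper does not prove this statement: Theorem~\ref{PBound} is quoted from Mar\'oti~\cite{MarotiPBound} and invoked as a black box in the proof of Theorem~\ref{Thm1}. There is therefore no in--paper argument to compare your attempt against.

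For what it is worth, your strategy---induct via the divisor--sum recurrence $n\,p(n)=\sum_{k=1}^{n}\sigma(k)\,p(n-k)$ and play the surplus of $\sigma$ against the integral deficit---is exactly the classical elementary route going back to Erd\H{o}s (1942), and it is the method the cited reference itself employs. So you have correctly reconstructed the approach of the source, even though the present paper supplies none.

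What you have written, however, is a plan rather than a proof. You correctly compute that $\int_0^n x\,e^{2\sqrt{n-x}}\,dx = n\,e^{2\sqrt n}-\tfrac{3}{2}\sqrt{n}\,e^{2\sqrt n}+O(e^{2\sqrt n})$ and correctly diagnose that the shortfall of order $\sqrt{n}\,e^{2\sqrt n}$ must be recovered from $\sigma(k)-k$ and the sum--integral correction. But that recovery \emph{is} the proof, and you only gesture at it. The weight $e^{2\sqrt{n-k}}$ is largest for small $k$, where $\sigma(k)-k$ is most erratic (it is $0$ at $k=1$ and equals $1$ at every prime), so ``the surplus is reliably present'' is not true pointwise; one needs an averaged input such as $\sum_{k\le K}\sigma(k)=\tfrac{\pi^2}{12}K^2+O(K\log K)$ fed through partial summation against the decreasing weight, followed by an explicit threshold $n_0$ and a numerical check of the base cases below it. Until those constants are pinned down, the inductive step remains unestablished.
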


Recall that $A_n$ is the group of even permutations in $S_n$. The \textit{splitting criterion} for conjugacy classes in $A_n$ states that given a conjugacy class of even permutations in $S_n$: the conjugacy class is either a conjugacy class in $A_n$ or else it `splits' into two conjugacy classes in $A_n$, with splitting occurring if and only if the cycle structure corresponding to the conjugacy class consists of cycles having distinct odd lengths. Consequently, the number of conjugacy classes in $S_n$ that split in $A_n$ is equal to the number of partitions of $n$ into distinct odd parts, which we term as $\alpha(n)$. 

Let $\beta(n)$ be the number of conjugacy classes of even permutations in $S_n$ that do not split in $A_n$.  A classical result relating $k(A_n)$ to $k(S_n)$, via $\alpha(n)$ and $\beta(n)$, is the following.

\begin{theorem}\label{AlphaBetaThm}
Let $n \geq 3$. Then, $k(S_n) = \alpha(n) + 2\beta(n)$ and $k(A_n) = 2\alpha(n) + \beta(n)$.
\end{theorem}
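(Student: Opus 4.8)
The plan is to prove the two identities separately: the second is essentially a direct bookkeeping consequence of the stated splitting criterion, while the first reduces to a single signed-counting fact that I regard as the real content. Throughout I would use the \emph{parity dictionary} for cycle types: since a $j$-cycle has sign $(-1)^{j-1}$, a permutation whose cycle type is the partition $\lambda \vdash n$ has sign $(-1)^{\#\{\text{even parts of }\lambda\}}$, and so is even exactly when $\lambda$ has an even number of even parts. Consequently the conjugacy classes of $S_n$ consisting of even permutations are indexed by the partitions of $n$ with an even number of even parts, and by the definitions of $\alpha(n)$ and $\beta(n)$ there are precisely $\alpha(n)+\beta(n)$ of them: the $\alpha(n)$ classes of distinct-odd type together with the $\beta(n)$ remaining (non-splitting) even classes.

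For the identity $k(A_n) = 2\alpha(n)+\beta(n)$ I would invoke the splitting criterion directly. Each of the $\alpha(n)+\beta(n)$ even $S_n$-classes contributes to $A_n$ either two classes (exactly when its cycle type consists of distinct odd parts, i.e.\ for the $\alpha(n)$ splitting classes) or one class (for the $\beta(n)$ non-splitting classes). Summing these contributions yields $2\alpha(n)+\beta(n)$ conjugacy classes in $A_n$, as claimed.

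For the identity $k(S_n) = \alpha(n)+2\beta(n)$, let $O(n)$ be the number of $S_n$-classes of odd permutations, i.e.\ the number of partitions of $n$ with an odd number of even parts. Since every class is either even or odd, $k(S_n) = (\alpha(n)+\beta(n)) + O(n)$, so the identity is equivalent to the single statement $O(n)=\beta(n)$. Rewriting the difference of the two parity counts as a signed count, $(\alpha(n)+\beta(n)) - O(n) = \sum_{\lambda\vdash n}(-1)^{\#\{\text{even parts of }\lambda\}}$, the desired equality $O(n)=\beta(n)$ is in turn equivalent to the clean identity $\sum_{\lambda\vdash n}(-1)^{\#\{\text{even parts of }\lambda\}} = \alpha(n)$.

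I expect this last identity to be the crux, and I would settle it with generating functions. Weighting each even part by $(-1)$ and classifying parts by parity, the left-hand side has generating function $\prod_{k\text{ odd}}(1-q^k)^{-1}\cdot\prod_{k\text{ even}}(1+q^k)^{-1}$, whereas $\alpha(n)$ is the coefficient of $q^n$ in $\prod_{k\text{ odd}}(1+q^k)$. Setting $\Phi(q)=\prod_{k\ge1}(1-q^k)$ and repeatedly using the elementary relations $\prod_{k\text{ even}}(1-q^k)=\Phi(q^2)$, $\prod_{k\equiv 0\,(4)}(1-q^k)=\Phi(q^4)$ and $1+x=(1-x^2)/(1-x)$, I would reduce both products to the common form $\Phi(q^2)^2/\big(\Phi(q)\,\Phi(q^4)\big)$, which proves the identity and hence the theorem. (Alternatively, $O(n)=\beta(n)$ has a bijective proof via a fixed-point-free, sign-reversing involution on the partitions of $n$ that are not of distinct-odd type, obtained by either merging two equal parts into one part or splitting an even part in half; the generating-function route is cleaner in that it sidesteps the casework required to make such an involution well defined.)
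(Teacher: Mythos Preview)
Your argument is correct. The paper, however, does not supply a proof of this theorem at all: it is quoted as a classical result and attributed to D\'enes, Erd\H{o}s and Tur\'an, so there is nothing to compare your approach against on the paper's side.

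A couple of remarks on your write-up. Your derivation of $k(A_n)=2\alpha(n)+\beta(n)$ is the standard one-line application of the splitting criterion. For $k(S_n)=\alpha(n)+2\beta(n)$, your reduction to $O(n)=\beta(n)$ and then to the signed identity $\sum_{\lambda\vdash n}(-1)^{\#\{\text{even parts of }\lambda\}}=\alpha(n)$ is clean, and the generating-function verification checks out: with $\Phi(q)=\prod_{k\ge1}(1-q^k)$ one indeed finds
\[
\prod_{k\text{ odd}}(1-q^k)^{-1}\prod_{k\text{ even}}(1+q^k)^{-1}
=\frac{\Phi(q^2)}{\Phi(q)}\cdot\frac{\Phi(q^2)}{\Phi(q^4)}
=\frac{\Phi(q^2)^2}{\Phi(q)\,\Phi(q^4)}
=\prod_{k\text{ odd}}(1+q^k),
\]
the last equality following by the same manipulations applied to $\prod_{k\text{ odd}}(1+q^k)=\prod_{k\text{ odd}}(1-q^{2k})/(1-q^k)$. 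The parenthetical bijective alternative is a fine heuristic but, as you note, would need care to turn into an honest involution; the generating-function route you actually carry out is complete.
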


Consequently, we get the following corollary.

\begin{corollary}\label{CorrAlphaBeta}
Let $n \geq 3$. Then, $\dfrac{k(S_n)}{2} \leq k(A_n)$.
\end{corollary}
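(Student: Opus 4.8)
The plan is to reduce the inequality to a trivial algebraic manipulation using Theorem \ref{AlphaBetaThm}. First I would substitute the two identities $k(S_n) = \alpha(n) + 2\beta(n)$ and $k(A_n) = 2\alpha(n) + \beta(n)$ into the difference $2k(A_n) - k(S_n)$. This yields
\[
2k(A_n) - k(S_n) = \bigl(4\alpha(n) + 2\beta(n)\bigr) - \bigl(\alpha(n) + 2\beta(n)\bigr) = 3\alpha(n).
\]
Since $\alpha(n)$ counts partitions of $n$ into distinct odd parts, it is a non-negative integer, so $3\alpha(n) \geq 0$; rearranging gives $k(S_n) \leq 2k(A_n)$, which is the claimed bound after dividing by $2$.

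There is essentially no obstacle here: the only input beyond elementary arithmetic is Theorem \ref{AlphaBetaThm}, which is assumed, together with the observation $\alpha(n) \geq 0$. If one wanted slightly more, I would note that for every $n \geq 3$ one in fact has $\alpha(n) \geq 1$ (if $n$ is odd, the single part $n$ works; if $n$ is even with $n \geq 4$, the partition $1 + (n-1)$ into the two distinct odd parts $1$ and $n-1$ works), so the inequality is strict for all $n \geq 3$, though the corollary as stated only requires the non-strict version. This strengthening is not needed for what follows, so I would keep the proof to the one-line computation above.
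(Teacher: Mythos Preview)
Your proof is correct and is essentially the same as the paper's: both invoke Theorem~\ref{AlphaBetaThm} and the non-negativity of $\alpha(n)$, the only cosmetic difference being that the paper compares $\tfrac{\alpha(n)}{2} + \beta(n) \leq 2\alpha(n) + \beta(n)$ directly rather than computing $2k(A_n) - k(S_n) = 3\alpha(n)$. Your additional remark that $\alpha(n) \geq 1$ for $n \geq 3$ (and hence the inequality is strict) is a correct observation not made in the paper, but as you note it is not needed for the corollary as stated.
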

\begin{proof}
By Theorem \ref{AlphaBetaThm}, we have $\dfrac{k(S_n)}{2} = \dfrac{\alpha(n)}{2} + \beta(n) \leq 2\alpha(n) + \beta(n) = k(A_n)$, since $\alpha(n) \geq 0$.
\end{proof}

Further aspects of these results are given by Dénes, Erdős and Turán \cite{DET}.

\section{Proof of Theorem 1.3}

We are now in a position to prove our main result. We will show that
\begin{equation}
\dfrac{k(A_7)}{\log_2 |A_7|} = \min_{n \geq 3} \dfrac{k(A_n)}{\log_2 |A_n|}.
\end{equation}

For $3 \leq n \leq 11$, by explicit computation\footnote{Using GAP \cite{GAP}: \lstinline{gap> for n in [3..11] do Print(n, " : ", NrConjugacyClasses(AlternatingGroup(n)) / Log2(Float(Order(AlternatingGroup(n)))), "\n"); od;}} (see Table 1), we have that $\frac{k(A_7)}{\log_2 |A_7|} \leq \frac{k(A_n)}{\log_2 |A_n|}$.

\renewcommand{\arraystretch}{1.7}
\begin{table}[h]
\center
\begin{tabular}{|c|l||c|l||c|l|}
\hline $n$ & $\frac{k(A_n)}{\log_2 |A_n|}$ & $n$ & $\frac{k(A_n)}{\log_2 |A_n|}$ & $n$ & $\frac{k(A_n)}{\log_2 |A_n|}$ \\ \hline \hline
3   & $1.892 \dots$                  & 6   & $0.824 \dots$                                                             & 9   & $1.030 \dots$                                                                \\ \hline
4   & $1.115 \dots$                  & 7   & $0.796 \dots$                                                             & 10  & $1.154 \dots$                                                                \\ \hline
5   & $0.846 \dots$                  & 8   & $0.979 \dots$                                                             & 11  & $1.278 \dots$                                                                \\ \hline
\end{tabular}
\caption{Values of $\frac{k(A_n)}{\log_2 |A_n|}$ for $3 \leq n \leq 11$, computed using GAP \cite{GAP}.}
\end{table}

We prove the case when $n \geq 12$. Firstly note that $\log_2 |A_n| \leq (n - 2)\log_2 n$ since, 
$$\log_2 \frac{n!}{2} = -1 + \sum\limits_{r = 1}^{n} \log_2 r = \sum\limits_{r = 3}^{n} \log_2 r \leq (n - 2)\log_2 n$$
noting that $\log_2 (x)$ is monotonically increasing, $\log_2 1 = 0$ and $\log_2 2 = 1$.

Using this inequality and Corollary \ref{CorrAlphaBeta}, we obtain
$$\dfrac{k(A_n)}{\log_2 |A_n|} \geq \dfrac{k(S_n)}{2(n-2)\log_2 n}$$
and by Theorem \ref{PBound} and the fact that $p(n) = k(S_n)$, we have that:

\begin{equation}
\dfrac{k(A_n)}{\log_2 |A_n|} > t(n), \ \ \ t(n) = \dfrac{e^{2\sqrt{n}}}{28(n-2)\log_2 n}.
\end{equation}

Since, for large enough $n$, $28(n-2)\log_2 n << e^{2\sqrt{n}}$, then in particular there exists a positive integer $N$ such that for all $n \geq N$, $1 \leq t(N) \leq t(n)$. In particular for all $n \geq 12$, $$1 < 1.016 \ldots = t(12) \leq t(n)$$
and consequently by (3.2) and the case for $A_7$ in Table 1,

$$\dfrac{k(A_n)}{\log_2 |A_n|} > 1 > \dfrac{k(A_7)}{\log_2 |A_7|}$$
and therefore (3.1) follows. 

By the strict inequality in (3.2) and Table 1, it follows that equality holds if, and only if, $n = 7$. \qed

\section{Concluding remarks}

\begin{enumerate}[label=\textbf{(\arabic*)}]
\item This lower--bound was initially suggested through empirical analysis, using a genetic algorithm developed using the Python API of SageMath \cite{Sage}. Davies \textit{et al.} \cite{AIMath} recently highlighted the effectiveness of guiding mathematical intuition in this manner, through the use of artificial intelligence.

\item Since the number of characters of an Abelian group $G$ is $|G|$ and $\frac{k(A_7)}{\log_2 |A_7|} < 1$, then when $G$ is non--trivial we have that $\frac{k(G)}{\log_2 |G|} \geq 1 > \frac{k(A_7)}{\log_2 |A_7|}$.

\item Based on the result of Theorem \ref{Thm1} and \textbf{(2)} above, as well as further empirical analysis carried out using our algorithm, we hypothesise that the constant $C$ in Conjecture \ref{Conj1} is `close' to $\frac{k(A_7)}{\log_2 |A_7|}$, such that it holds with high probability. Hence we pose the following problem.
	\begin{problem}\label{Conj2}
	Which non--Abelian finite groups $G$ satisfy $\frac{k(G)}{\log_2 |G|} < \frac{k(A_7)}{\log_2 |A_7|}$?
	\end{problem}

Case--by--case analysis yields that $M_{11}$ (the Mathieu group of degree 11) is the smallest such group satisfying Problem \ref{Conj2}.\\

If $G$ is a group as in Problem \ref{Conj2} which embeds into $A_n$ for some $n \in \mathbb{N}$, then $k(G)$ is a lower--bound for $k(A_n)$, as follows.

\begin{corollary}
Let $G$ be a finite non--Abelian subgroup of $A_n$ for some $n \in \mathbb{N}$. If $\frac{k(G)}{\log_2 |G|} < \frac{k(A_7)}{\log_2 |A_7|}$, then $k(G) < k(A_n)$. 
\end{corollary}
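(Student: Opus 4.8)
The plan is to chain together Theorem~\ref{Thm1} with the hypothesis on $G$. The key observation is that $G \leq A_n$ forces $|G| \mid |A_n|$ by Lagrange's theorem, and in particular $|G| \leq |A_n|$, so $\log_2 |G| \leq \log_2 |A_n|$. First I would apply Theorem~\ref{Thm1} to get
$$k(A_n) \geq \frac{k(A_7)}{\log_2 |A_7|} \cdot \log_2 |A_n|.$$
Then, using $\log_2 |A_n| \geq \log_2 |G|$ (valid since $|A_n| \geq |G| \geq 1$ and $\log_2$ is increasing), the right-hand side is bounded below by $\frac{k(A_7)}{\log_2 |A_7|} \cdot \log_2 |G|$.

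Next I would bring in the hypothesis $\frac{k(G)}{\log_2 |G|} < \frac{k(A_7)}{\log_2 |A_7|}$. Since $G$ is non-Abelian it is in particular non-trivial, so $|G| > 1$ and hence $\log_2 |G| > 0$; multiplying the hypothesis through by $\log_2 |G|$ gives $k(G) < \frac{k(A_7)}{\log_2 |A_7|} \cdot \log_2 |G|$. Combining this with the previous display yields
$$k(G) < \frac{k(A_7)}{\log_2 |A_7|} \cdot \log_2 |G| \leq \frac{k(A_7)}{\log_2 |A_7|} \cdot \log_2 |A_n| \leq k(A_n),$$
which is exactly the desired conclusion $k(G) < k(A_n)$.

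There is essentially no obstacle here — the corollary is a direct composition of Theorem~\ref{Thm1}, Lagrange's theorem, and monotonicity of $\log_2$. The only points needing a moment's care are: (i) confirming $\log_2 |G| > 0$ so that multiplying the strict inequality by it preserves strictness, which follows from $G$ being non-Abelian hence non-trivial; and (ii) noting that one does not even need $n \geq 3$ to be assumed separately, since a non-Abelian subgroup of $A_n$ forces $|A_n| \geq |G| \geq 6$ and thus $n \geq 4$, placing us safely in the range where Theorem~\ref{Thm1} applies. I would write the proof as the single chain of inequalities above, prefaced by the two one-line justifications.
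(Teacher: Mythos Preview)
Your proof is correct and follows essentially the same route as the paper: combine the hypothesis, the inequality $|G|\leq |A_n|$ from Lagrange, and Theorem~\ref{Thm1} into a single chain of inequalities. The only cosmetic difference is the order in which you apply the two non-strict bounds; your arrangement (first pass from $\log_2|G|$ to $\log_2|A_n|$, then invoke Theorem~\ref{Thm1}) is if anything slightly cleaner, and your side remarks about $\log_2|G|>0$ and why $n\geq 3$ is automatic are both apt.
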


\begin{proof}
Follows immediately from the fact that $|G| \leq |A_n|$ and (3.1), namely since: $$k(G) < \log_2 |G| \cdot \dfrac{k(A_7)}{\log_2 |A_7|} \leq \log_2 |G| \cdot \dfrac{k(A_n)}{\log_2 |A_n|} < k(A_n).$$
\end{proof}
\end{enumerate}

\section*{Acknowledgements}

The author would like to thank Prof. Josef Lauri for introducing the author to Conjecture \ref{Conj1} and Adriana Baldacchino for their helpful suggestions in the development of the genetic algorithm used to deduce the bound.

\bibliography{biblio.bib}

\newpage

\appendix
\section{Searching for groups through genetic algorithms}

We briefly outline in this section the rationale behind our algorithmic work which helped in the deduction of the main result of this short--paper. First and foremost, we designed a \textit{score function} associated with the parameters which we are interested in. In particular, let $\mathcal{G}$ be a collection of finite permutation groups. We defined a score--function $s \colon \mathcal{G} \to \mathbb{R}^+$ which maximises $\frac{\log_2 |G|}{k(G)}$ for $G \in \mathcal{G}$, \textit{ie.} the highest scoring groups are those which minimise $\frac{k(G)}{\log_2 |G|}$, since we are after Conjecture \ref{Conj1}.

We then generate a random population of permutation groups, each one constructed from a random set of permutations and obtaining the permutation group they generate using the \verb|PermutationGroup| functionality of \verb|SageMath|. Optionally, one could also enrich the population with `known' groups such as cyclic groups, alternating groups, etc. 

This population is then \textit{evolved} through our genetic algorithm over a number of pre--determined iterations, keeping the highest scoring groups from one iteration to the next. 

To evolve the population, besides scoring and selecting the groups in the population, we include mechanisms to introduce new groups from the highest scoring ones into the mix, by a number of \textit{mutation} and \textit{cross--over} operations.

\subsection{Cross--over operations}

We cross--over between two groups in one of two methods:
\begin{enumerate}
	\item[i.] either by introducing the direct--product of the two permutation groups into the population,
	\item[ii.] or by considering a generating set of each of the two permutation groups, randomly choosing a subset of permutations from each, and adding to the population the group generated by the union of these resulting subsets. 
\end{enumerate}

In this manner we aim at being able to search for groups of larger order than in the current population (through the direct--product) and which inherit traits from two different high--scoring groups (by mixing their generators).

\subsection{Mutation operations}

The mutation operations considered here take a group and return a new one, by mutating the generating set of the existing group and then returning whatever group is generated by the mutated generating set. We outline here a number of mutations which we explored:
\begin{enumerate}
\item[i.] a mutation could be the random removal of a generator from the existing generating set,
\item[ii.] or the addition of a new randomly generated permutation to the existing generating set,
\item[iii.] or the addition of a new generator of the form $ghg^{-1}h^n$ for some existing generators $g$ and $h$,
\item[iv.] or the addition of a new generator of the form $(gh)^n$ for some existing generators $g$ and $h$.
\end{enumerate}

In this manner we aim at enriching the population which both `random' as well as well--understood structure.

\subsection{Conclusion}

In this manner, starting from different randomly generated populations, we consistently converged towards the group $A_7$ as being the highest scoring one, with other alternating groups scoring similarly (but slightly below), thereby suggesting the result proved in this paper once the link was established.

Further searches with a larger population and increased iterations eventually yielded the Mathieu group of degree 11, $M_{11}$, as a higher scoring group than $A_7$, as noted earlier.

\end{document}